\title{Remnant inequalities and doubly-twisted conjugacy in free groups}
\author{P. Christopher Staecker\thanks{
Address: Department of Mathematics and Computer Science, Fairfield
University, Fairfield CT}
\thanks{Email: cstaecker@fairfield.edu}
\thanks{Keywords: Nielsen theory, coincidence theory, twisted
  conjugacy, doubly twisted conjugacy, asymptotic density}
\thanks{MSC2000: 54H25, 20F10}}
\theoremstyle{hplain}
\newtheorem{thm}{Theorem}
\newtheorem{lem}[thm]{Lemma}
\newtheorem{cor}[thm]{Corollary}
\theoremstyle{definition}
\newtheorem{exl}[thm]{Example}
\newtheorem{defn}[thm]{Definition}
\newcommand{\BSL}{\emph{BSL}}
\newcommand{\SB}{\emph{SB}}
\newcommand{\vt}{\mathbf{t}}
\DeclareMathOperator{\Rem}{Rem}
\DeclareMathOperator{\Epi}{Epi}
\DeclareMathOperator{\Mono}{Mono}
\DeclareMathOperator{\ED}{ED}
\newcommand{\Z}{\mathbb Z}
\renewcommand{\bar}{\overline}
\renewcommand{\phi}{\varphi}
\begin{document}
\bibliographystyle{plain}

\maketitle

\begin{abstract}
We give two results for computing doubly-twisted conjugacy relations
in free groups with respect to homomorphisms $\phi$ and $\psi$ such
that certain remnant words from $\phi$ are longer than the images of
generators under $\psi$. 

Our first result is a remnant inequality condition which implies that
two words $u$ and $v$ are not doubly-twisted conjugate. Further we
show that if $\psi$ is given and $\phi$, $u$, and $v$ are chosen at
random, then the probability that $u$ and $v$ are not doubly-twisted
conjugate is 1. In the particular case of singly-twisted conjugacy, 
this means that if $\phi$, $u$, and $v$ are chosen at random, then $u$
and $v$ are not in the same singly-twisted conjugacy class with probability 1.

Our second result generalizes Kim's ``bounded solution length''. We
give an algorithm for deciding doubly-twisted conjugacy relations in the
case where $\phi$ and $\psi$ satisfy a similar remnant inequality. In
the particular case of singly-twisted conjugacy, our algorithm
suffices to decide any twisted conjugacy relation if $\phi$ has
remnant words of length at least 2.

As a consequence of our generic properties we give an elementary proof
of a recent result of Martino, Turner, and Ventura, that
computes the densities of injective and surjective homomorphisms
from one free group to another. We further compute the expected value of the density of the image of a homomorphism.
\end{abstract}

\section{Introduction}
Let $G$ and $H$ be finitely generated free groups, and $\phi, \psi: G
\to H$ be homomorphisms. The group $H$ is partitioned into the set of
\emph{doubly-twisted conjugacy} classes as follows: $u, v \in H$ are in the
same class (we write $[u] = [v]$) if and only if there is some $z\in G$
with
\[ u = \phi(z) v \psi(z)^{-1}. \]

Our principal motivation for studying doubly-twisted conjugacy is Nielsen
coincidence theory (see \cite{gonc05} for a survey), the study of
the coincidence set of a pair of mappings and the minimization of this
set while the mappings are changed by homotopies. Our focus on free
groups is motivated specifically by the problem of computing \emph{Nielsen
classes} of coincidence points for pairs of mappings $f, g: X \to Y$,
where $X$ and $Y$ are compact surfaces with boundary. 

A necessary condition for two coincidence points to be combined by a
homotopy (thus reducing the total number of coincidence points) is
that they belong to the same Nielsen class. (Much
of this theory is a direct generalization of similar techniques in
fixed point theory, see \cite{jian83}.) The number of ``essential''
Nielsen classes is called the Nielsen number, and is a lower bound for
the minimal number of coincidence points when $f$ and $g$ are allowed
to vary by homotopies. 

On surfaces with boundary, deciding when two coincidence points are in the
same Nielsen class is equivalent to solving a natural doubly-twisted
conjugacy problem in the fundamental groups, using the induced
homomorphisms given by the pair of mappings. 
Thus the Nielsen classes of coincidence points correspond to
twisted conjugacy classes in $\pi_1(Y)$. 

The problem of computing doubly-twisted conjugacy classes in free groups is
nontrivial, even in the singly-twisted case which arises in fixed
point theory, where $\phi$ is an endomorphism and $\psi$ is the
identity. An algorithm for the singly-twisted conjugacy decision
problem where $\phi$ is an automorphism is given in \cite{bmmv06}.
Few techniques for computing doubly-twisted conjugacy in free groups are
available. A generally applicable technique using abelian and
nilpotent quotients is given in \cite{stae07b}, but is it is hard to
predict when it will be sucessful. A technique is given in
\cite{stae09b} which can often show 
that two words are in different doubly-twisted conjugacy classes, but
the hypotheses on the homomorphisms are quite strong.

Our methods are based on the combinatorial \emph{remnant} condition
for homomorphisms. Informally, a homomorphism $\phi$ has remnant if the
images under $\phi$ of generators have limited cancellation when
multiplied together. The noncancelling parts are called the remnant
subwords.

If $\phi, \psi: G \to H$ are
homomorphisms and $u,v \in H$ are words, we consider the remnant subwords
of $\phi$ which remain uncancelled even after making products with the
words $u$ and $v$ themselves. If, in each generator $a$, this remnant
subword is of length greater than or equal to the length of
$\psi(a)$, then $u$ and $v$ are in different doubly-twisted conjugacy
classes. This is shown in Section \ref{densitysection}. 

In Section \ref{mtvsection} we present some improvements
to recent work of Martino, Turner, and Ventura in
\cite{mtv08} concerning the density of injective and surjective
homomorphisms of free groups. Their paper shows that, when the rank of
$H$ is greater than 1, a homomorphism chosen at random will be
injective but not surjective with probability 1. We give a new and
more elementary proof of this theorem, and strengthen the result
concerning surjectivity by showing that the expected value of the
density of the image subgroup of a random
homomorphism is 0. We also treat the case when the rank of $H$ is 1.

In Section \ref{bslsection} we consider only the traditional remnant
subword (without making products with $u$ and $v$). If this subword of
$\phi(a)$ is of 
length strictly greater than the length of $\psi(a)$ for each
generator $a$, then we give an algorithm to decide whether or not $u$
and $v$ are in different doubly-twisted conjugacy classes. This result
is a generalization of Kim's ``bounded solution
length'' technique in \cite{kim07}, and
was developed independently for singly-twisted conjugacy by Hart,
Heath, and Keppelmann in \cite{hhk09}.

In Sections \ref{densitysection} and \ref{bslsection} we show that,
given a homomorphism $\psi$, the remnant inequality used in that
section will hold with probability 1 when $\phi$ 
is chosen at random. This implies in Section \ref{densitysection} that
if $\psi$ is fixed and $\phi$, $u$, and $v$ are all chosen at random,
then $[u] \neq [v]$ with probability 1. In Section \ref{bslsection} we show that
if $\psi$ is fixed and $\phi$ is chosen at random, then there is an
algorithm to decide whether or not $[u] = [v]$ for any words $u$ and $v$. 

The techniques and algorithms described in this paper have been
implemented for the computational algebra system GAP
\cite{gap}. Source code and a user-friendly web based version are
available for experimentation at the author's website.  

The author would like to thank Robert F. Brown, Marlin Eby, and Philip
Heath for helpful comments on this paper, and Armando Martino for
bringing the reference \cite{mtv08} to our attention.

\section{Generic remnant properties}\label{reviewsection}
Wagner, in \cite{wagn99}, defined the remnant condition for
free group endomorphisms which would become a key tool for several
later techniques for computation of the Nielsen number in fixed point
theory (the special case where $\psi$ is the identity) for certain
mappings on surfaces with boundary. Extensions of Wagner's technique
have been made in \cite{hk07} and \cite{kim07}, and for Nielsen
periodic point theory in \cite{hhk08}. 

Wagner's definition of remnant extends to homomorphisms (not necessarily
endomorphisms) of free groups as follows. Throughout, for a word $w
\in H$, the reduced word length of $w$ is denoted $|w|$. 

\begin{defn}\label{remnantdef}
Let $H$ be a free group, and $t = (h_1, \dots, h_n)$ a tuple of words
of $h$. We say that $t$ \emph{has remnant} when, for each $i$, there
is a nontrivial subword of $h_i$ which does not cancel in any product
of the form
\[ h_j^{\alpha_j} h_i h_k^{\beta_k} \]
where $j,k \in \{1, \dots, n\}$, with $\alpha_l, \beta_l \in \{ -1, 0,
1 \}$ for $l\neq i$ and $\alpha_i, \beta_i \in \{ 0, 1\}$. Each such
noncanceling subword is called the remnant of $h_i$, denoted $\Rem_t
h_i$.

The statement that a set of elements has remnant is closely related to
the statement that it be Nielsen reduced (see e.g.\ \cite{ls77}).  

If $G$ has generators $a_1, \dots, a_n$, and $\phi: G \to H$ is
a homomorphism, then we say that $\phi$ \emph{has remnant} if the
tuple $(\phi(a_1), \dots, \phi(a_n))$ has remnant in the above
sense. In this case, the remnant of $\phi(a_i)$ is denoted $\Rem_\phi
a_i$. 

If $\phi$ has remnant, the \emph{remnant length} of $\phi$ is the
minimum length of $|\Rem_\phi a_i|$ for any $i$.
\end{defn}

Throughout this paper, the remnant condition is used 
typically as follows: we will have a homomorphism $\phi: G \to H$ and some
element $z\in G$ with reduced form $z = a_{j_1}^{\eta_1} \dots
a_{j_k}^{\eta_k}$. When $\phi$ has remnant, writing
$X_i = \phi(a_{j_i})^{\eta_i}$, we have the reduced product
\[ \phi(z) = X_1 \dots X_k = R_1 \dots R_k, \]
where $R_i$ is the subword of $X_i$ which does not cancel in the
product $X_1 \dots X_k$. Since $\phi$ has remnant, the right hand side
will be fully reduced and $(\Rem_\phi a_i)^{\eta_i}$ will be a subword
of $R_i$ for all $i$. 

Use of the asymptotic density in the context of doubly-twisted
conjugacy was presented in \cite{stae09b}. We will quote the relevant
terms and results here.

For a free group $G$ and a natural number $p$, let $G_p$ be the subset
of all words of length at most $p$. The \emph{asymptotic density} (or simply
\emph{density}) of a subset $S\subset G$ is defined as 
\[ D(S) = \lim_{p \to \infty} \frac{|S \cap G_p|}{|G_p|}, \]
where $|\cdot|$ denotes the cardinality. The set $S$ is said to be
\emph{generic} if $D(S) = 1$.

Similarly, if $S \subset G^l$ is a set of $l$-tuples of
elements of $G$, the asymptotic density of $S$ is defined as
\[ D(S) = \lim_{p \to \infty} \frac{|S \cap (G_p)^l|}{|(G_p)^l|}, \]
and $S$ is called \emph{generic} if $D(S) = 1$.

A homomorphism on the free groups $G \to H$ with $G = \langle a_1,
\dots, a_n\rangle$ 
is equivalent combinatorially to an $n$-tuple of elements of $G$
(the $n$ elements are the words $\phi(a_1), \dots, \phi(a_n)$). Thus
the asymptotic density of a set of 
homorphisms can be defined in the same sense as above, viewing the set of
homomorphisms as a collection of $n$-tuples. 

A theorem of Robert F.\ Brown in \cite{wagn99} established that
``most'' endomorphisms have remnant. This is strengthened and made
more specific in \cite{stae09b} as the following:
\begin{lem}\label{genericlem} 
Let $G$ and $H$ be free groups with the rank of $H$ greater than 1. Then
for any natural number $l$, the set of homomorphisms
  $\phi:G \to H$ with remnant length at least $l$ is generic.
\end{lem}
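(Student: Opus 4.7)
The plan is to show that the complement—the set of homomorphisms $\phi$ for which $|\Rem_\phi a_i| < l$ for some $i$—has density zero. Unpacking Definition \ref{remnantdef}, this complement is a finite union of events $E_{i,j,k,\alpha_j,\beta_k}$, ranging over indices $i, j, k \in \{1,\ldots,n\}$ and signs $\alpha_j, \beta_k$ satisfying the stated constraints, where $E_{i,j,k,\alpha_j,\beta_k}$ is the event that fewer than $l$ letters of $\phi(a_i)$ survive cancellation in the reduced product $\phi(a_j)^{\alpha_j} \phi(a_i) \phi(a_k)^{\beta_k}$. Since the union is finite, it suffices to show each such event has density zero.

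Fix one such event; write $h_s = \phi(a_s)$, let $r = \mathrm{rank}(H) \geq 2$, and let $N_p \sim (2r-1)^p$ count reduced words in $H$ of length at most $p$. A failure with $|h_i| = q$ means a left cancellation of length $M$ (between $h_j^{\alpha_j}$ and $h_i$) and a right cancellation of length $M'$ (between $h_i$ and $h_k^{\beta_k}$) with $M + M' > q - l$, so at least one of $M, M'$ exceeds $(q-l)/2$. The condition that the left cancellation has length at least $M$ forces the first $M$ letters of $h_i$ to be determined by $h_j^{\alpha_j}$, so for fixed $h_j$ there are at most $(2r-1)^{q-M}$ reduced $h_i$ of length $q$ satisfying this. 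Summing the geometric series over $M \geq (q-l)/2$ leaves $O((2r-1)^{(q+l)/2})$ such $h_i$ of length $q$ for each $h_j$; the right-cancellation case is symmetric.

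Summing further over $q \leq p$ and allowing the remaining components $h_s$ (for $s \notin \{i,j,k\}$) to range freely gives $O((2r-1)^{(l+p)/2} \cdot N_p^{n-1})$ bad tuples in total, so the density is bounded by $O((2r-1)^{(l-p)/2}) \to 0$ since $2r-1 \geq 3$. Taking the finite union over all $(i, j, k, \alpha_j, \beta_k)$, the complement has density zero and the lemma follows.

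The main technical obstacle is the bookkeeping in the degenerate cases where $i \in \{j,k\}$ or $j = k$: in these subcases the prefix/suffix conditions are not imposed on independent words but on the same word (or pair of words), and one must verify that no fortuitous coincidence inflates the count. In each such subcase the matching conditions only further restrict the relevant $h_i$ or $h_j$, so the same geometric-series bound applies and the exponential decay survives. The hypothesis $\mathrm{rank}(H) > 1$ is essential, supplying the branching factor $2r-1 \geq 3$ that drives the decay; without it, reduced words in $H$ form too sparse a set for this counting argument.
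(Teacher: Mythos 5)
Your counting argument is correct, but there is nothing in this paper to compare it against: Lemma \ref{genericlem} is stated here without proof, being imported from \cite{stae09b} (it strengthens Brown's density theorem from the appendix of \cite{wagn99}). Your proof is of exactly the type that works, and it mirrors the one counting estimate this paper does carry out, in Lemma \ref{poslem}: the key fact in both is that at most $(2r-1)^{q-M}$ reduced words of length $q$ carry a prescribed reduced prefix (or suffix) of length $M$, with the hypothesis $\mathrm{rank}(H)>1$ entering only through the branching factor $2r-1\ge 3$. Three points in your write-up are worth affirming. First, the maximal left and right cancellations of $\phi(a_i)$ may be realized in \emph{different} products $h_j^{\alpha_j}h_ih_k^{\beta_k}$, so the complement must be covered by the union over all pairs $(j,\alpha_j)$, $(k,\beta_k)$; your event decomposition does exactly this. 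Second, your dismissal of the degenerate cases is justified: for $j=i$ (only exponent $+1$ is allowed there) the junction cancellation in $h_ih_i$ is strictly less than $|h_i|/2$, and prescribing a cancellation of length $M$ still determines $M$ letters of $h_i$ from the remaining ones, so the $O((2r-1)^{q-M})$ bound and hence the geometric-series estimate survive. Third, your bound silently but correctly absorbs the tuples with $|\phi(a_i)|<l$ (the case $\alpha_j=\beta_k=0$, $M=M'=0$, where $(q-l)/2<0$), which must be in the bad set since short images violate the remnant-length condition outright; the resulting count of all words of length $q<l$ is still $O((2r-1)^{(q+l)/2})$, so the final density bound $O((2r-1)^{(l-p)/2})\to 0$ goes through as you claim.
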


\section{The density of non-conjugate pairs}\label{densitysection}
Given a homomorphism $\phi: G \to H$ and two elements $u,v \in H$,
there is a natural homomorphism $\phi * u * v: G * \Z * \Z \to H$,
(where $*$ denotes the free product) defined as follows: let $G =
\langle a_1, \dots, a_n \rangle$, 
write $b_1$ as the generator of the first $\Z$ factor, and $b_2$ as
the generator of the second $\Z$ factor. We then define $\phi*u*v$ on
each factor by
\[ \phi*g*h : \begin{array}{rcl}
a_i &\mapsto &\phi(a_i) \\
b_1 &\mapsto &u \\
b_2 &\mapsto &v \end{array} \]

The basic idea of the present theorem is inspired by the nice proof of
Wagner's theorem given in \cite{kim07}. 

\begin{thm}\label{classdist}
Let the rank of $H$ be greater than 1, let $\phi, \psi: G \to H$ be
homomorphisms, and let $u, v \in H$. Let $\bar \phi = \phi * u *
v$. If $\bar \phi$ has remnant with 
\[ |\Rem_{\bar \phi} a| \ge |\psi(a)| \]
for all generators $a\in G$, then $[u]\neq [v]$.
\end{thm}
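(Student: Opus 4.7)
My plan is to prove this by contradiction. Assume $[u]=[v]$, so there is some $z\in G$ with $u=\phi(z)\,v\,\psi(z)^{-1}$; rearranging gives
\[ u^{-1}\phi(z)v=\psi(z). \]
The trick is to recognise the left-hand side as $\bar\phi$ applied to a single reduced word in the free product $G * \Z * \Z$. Writing $z=a_{j_1}^{\eta_1}\cdots a_{j_k}^{\eta_k}$ in reduced form, I would consider
\[ w = b_1^{-1}\, a_{j_1}^{\eta_1}\cdots a_{j_k}^{\eta_k}\, b_2 \in G*\Z*\Z. \]
Since $z$ uses only the generators $a_i$, the word $w$ is already reduced in $G*\Z*\Z$, and $\bar\phi(w)=u^{-1}\phi(z)v=\psi(z)$.

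Next I would apply the standard remnant estimate recalled right after Definition~\ref{remnantdef}: because $\bar\phi$ has remnant, the reduced form of $\bar\phi(w)$ contains the signed remnants $(\Rem_{\bar\phi} b_1)^{-1}$, $(\Rem_{\bar\phi} a_{j_1})^{\eta_1},\dots,(\Rem_{\bar\phi} a_{j_k})^{\eta_k}$, $\Rem_{\bar\phi} b_2$ as disjoint subwords appearing in order, so
\[ |\psi(z)|=|\bar\phi(w)|\;\ge\;|\Rem_{\bar\phi} b_1|+\sum_{i=1}^{k}|\Rem_{\bar\phi} a_{j_i}|+|\Rem_{\bar\phi} b_2|. \]
The hypothesis $|\Rem_{\bar\phi}a|\ge|\psi(a)|$ on each $a\in G$ then gives $\sum_i|\Rem_{\bar\phi} a_{j_i}|\ge\sum_i|\psi(a_{j_i})|\ge|\psi(z)|$. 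Subtracting $|\psi(z)|$ from both sides leaves $|\Rem_{\bar\phi} b_1|+|\Rem_{\bar\phi} b_2|\le 0$, contradicting the nontriviality of remnant subwords (each has length at least $1$).

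The main conceptual step is spotting the word $w=b_1^{-1}z b_2$: this is what packages $u$, $v$, and $\phi(z)$ into a single reduced word so that the remnant machinery of $\bar\phi$ simultaneously protects cancellation on both ends of $\phi(z)$. After that the argument is essentially bookkeeping: expanding $\phi(z)$ as the reduced product $X_1\cdots X_k$ with $X_i=\phi(a_{j_i})^{\eta_i}$, bounding $|\psi(z)|$ by $\sum_i|\psi(a_{j_i})|$, and reading off the contradiction. The edge case $z=1$ needs no special treatment: there $w=b_1^{-1}b_2$ is still reduced, $\bar\phi(w)=u^{-1}v$, and the same length bound forces $|u^{-1}v|\ge 2$, ruling out $u=v$.
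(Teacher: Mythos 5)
Your proof is correct and is essentially the paper's argument: the paper likewise applies the remnant property of $\bar\phi$ to the product $u^{-1}X_1\cdots X_k v$ (your $\bar\phi(b_1^{-1}zb_2)$, just without naming the word $w$), obtaining nontrivial uncancelled pieces $R_u$, $R_v$ and remnant subwords inside each $R_i$. The only difference is cosmetic bookkeeping — the paper concludes $|u^{-1}\phi(z)v| > |\psi(z)|$ directly, while you rearrange the same inequality to force $|\Rem_{\bar\phi}b_1| + |\Rem_{\bar\phi}b_2| \le 0$.
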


\begin{proof}
For the sake of a contradiction we assume that $[u]=[v]$, and so there
is some $z \in G$ with 
\begin{equation}\label{twistconj}
\psi(z) = u^{-1} \phi(z) v. 
\end{equation}
Express $z$ as the reduced word
\[ z = a_{j_i}^{\eta_i} \dots a_{j_k}^{\eta_k}, \]
where $a_{j_i}$ are generators of $G$ and $\eta_i \in \{-1, 1\}$, and
write the images of the generators as the reduced words $X_i = \phi(a_{j_i}^{\eta_i})$ and $Y_i = \psi(a_{j_i}^{\eta_i})$.

We first write
\[ u^{-1}\phi(z)v = u^{-1}X_1 \dots X_kv = R_u R_1 \dots R_k R_v, \]
where the right hand side is reduced, $R_i$ is the subword of
$X_i$ which does not cancel in the above, and $R_u$ and $R_v$ are the
subwords of $u^{-1}$ and $v$ which do not cancel in the above. Because
$\bar \phi$ has remnant, we know that $R_u$ and $R_v$ are nontrivial,
and $R_i$ contains $(\Rem_{\bar\phi} a_{j_i})^{\eta_i}$ as a subword.

Then we have  
\begin{align*}
|u^{-1}\phi(z) v| &= |R_u| + |R_v| + \sum_{i=1}^{k} |R_i| \\
&\ge |R_u| + |R_v| + \sum_{i=1}^{k} |\Rem_{\bar\phi}
a_{j_i}| \\
&> \sum_{i=1}^{k} |\Rem_{\bar\phi} a_{j_i}| \ge \sum_{i=1}^k |Y_i|
\ge |\psi(z)|,
\end{align*}
and the strict inequality contradicts \eqref{twistconj}.
\end{proof}

The above theorem is similar to the result of \cite{stae09b} that
$[u]\neq [v]$ when $\phi * \psi * (uv^{-1})$ has remnant. 
The fact that we require no
remnant condition of $\psi$ allows our result, unlike that of
\cite{stae09b}, to be specialized to the case of singly-twisted
conjugacy: 
\begin{cor}
For an endomorphism $\phi: G\to G$, let $[u]$ denote the singly-twisted conjugacy class of $u$. 

If $\phi * u * v$ has remnant, then $[u] \neq [v]$.
\end{cor}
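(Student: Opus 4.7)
The plan is simply to specialize Theorem \ref{classdist} to the case $\psi = \mathrm{id}_G$, since the singly-twisted conjugacy relation on $G$ (with respect to an endomorphism $\phi$) is exactly the doubly-twisted conjugacy relation for the pair $(\phi, \mathrm{id}_G)$: the equation $u = \phi(z)\, v\, \psi(z)^{-1}$ reduces to $u = \phi(z)\, v\, z^{-1}$ when $\psi = \mathrm{id}$.

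With this choice, for each generator $a_i$ of $G$ we have $\psi(a_i) = a_i$, so $|\psi(a_i)| = 1$. On the other hand, Definition \ref{remnantdef} requires the remnant subword to be a \emph{nontrivial} subword, so $|\Rem_{\bar\phi} a_i| \ge 1$ whenever $\bar\phi = \phi * u * v$ has remnant. Thus the inequality
\[ |\Rem_{\bar\phi} a_i| \ge |\psi(a_i)| \]
holds automatically for every generator $a_i$, and the hypothesis of Theorem \ref{classdist} is satisfied. The theorem then yields $[u] \neq [v]$ in the doubly-twisted sense, which by the identification above is the same as $[u] \neq [v]$ in the singly-twisted sense.

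There is essentially no obstacle here; the statement is a direct specialization, and the only thing to verify is that the trivial numerical bound $|\psi(a)| = 1$ matches up with the minimum guaranteed by the remnant hypothesis. The only mild caveat is that Theorem \ref{classdist} requires the codomain to have rank greater than $1$; here the codomain is $G$ itself, so the corollary should be read under the implicit assumption that $G$ has rank at least $2$ (an assumption that is in any case forced by the existence of nontrivial remnant for $\bar\phi$ when $\bar\phi$ is not essentially a map to $\mathbb{Z}$).
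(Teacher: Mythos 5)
Your proposal is correct and matches the paper's (implicit) argument exactly: the corollary is stated as a direct specialization of Theorem \ref{classdist} with $\psi$ the identity, using that remnant subwords are nontrivial so $|\Rem_{\bar\phi} a| \ge 1 = |\psi(a)|$ for every generator. Your remark about the rank hypothesis on the codomain is a reasonable reading of the same implicit assumption in the paper.
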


We now note that Theorem \ref{classdist} will generically apply for a
particular $\psi$ when $\phi$, $u$, and $v$ are chosen at random. This
result is a strengthening of the final Theorem of \cite{stae09b}
using different methods. 

\begin{thm}\label{generictriple}
Let the rank of $H$ be greater than 1, and let $\psi: G \to H$ be any
homomorphism. Then (again letting $[u]$ denote the doubly-twisted
conjugacy class with respect to $\phi$ and $\psi$) the set 
\[ S = \{ (\phi, u, v) \mid [u] \neq [v] \} \]
is generic.
\end{thm}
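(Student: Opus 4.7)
The plan is to reduce this density statement directly to Lemma \ref{genericlem} applied to the extended homomorphism $\bar\phi = \phi * u * v$. The key observation is that a triple $(\phi, u, v)$ is combinatorially the same data as the homomorphism $\bar\phi: G * \mathbb{Z} * \mathbb{Z} \to H$, namely an $(n+2)$-tuple of elements of $H$ (the $n$ images $\phi(a_i)$ together with the two extra words $u$ and $v$). So the asymptotic density of triples $(\phi, u, v)$, measured with respect to $(H_p)^{n+2}$, coincides with the density of homomorphisms $\bar\phi$ measured in the usual way from Section \ref{reviewsection}.

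Next I would set $L = \max_{1 \le i \le n} |\psi(a_i)|$. Since $\psi$ is fixed, $L$ is a fixed natural number. By Lemma \ref{genericlem} applied to the free group $G * \mathbb{Z} * \mathbb{Z}$, the set of homomorphisms $\bar\phi: G * \mathbb{Z} * \mathbb{Z} \to H$ with remnant length at least $L$ is generic. For any such $\bar\phi$, the minimum of $|\Rem_{\bar\phi} a|$ over all generators of $G * \mathbb{Z} * \mathbb{Z}$ is at least $L$, so in particular for each generator $a_i$ of $G$ we have
\[ |\Rem_{\bar\phi} a_i| \ge L \ge |\psi(a_i)|. \]
This is precisely the hypothesis of Theorem \ref{classdist}, which then yields $[u] \neq [v]$.

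Thus the generic set of $\bar\phi$ produced by Lemma \ref{genericlem} is a subset of $S$ (under the identification of triples with extended homomorphisms), and hence $S$ itself is generic.

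The only technical point that needs a moment of care is the identification of densities; everything else is a direct application of the two previous results. One should explicitly remark that the density of triples is the same as the density of $(n+2)$-tuples, so that Lemma \ref{genericlem} may be invoked as stated. Beyond this bookkeeping the argument is essentially immediate.
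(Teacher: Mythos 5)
Your proposal is correct and follows essentially the same route as the paper: identify a triple $(\phi,u,v)$ with an $(n+2)$-tuple in $H^{n+2}$ (equivalently a homomorphism $G * \Z * \Z \to H$), take the threshold to be the maximum of $|\psi(a_i)|$, and apply Lemma \ref{genericlem} together with Theorem \ref{classdist} to see that the generic set of extended homomorphisms with large remnant length sits inside $S$. No substantive differences from the paper's argument.
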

\begin{proof}
Letting $n$ be the number of generators of $G$, we may view a triple
$(\phi, u, v)$ as a $(n+2)$-tuple of elments of $H$ (since a choice of
$\phi$ is combinatorially equivalent to a choice of the $n$ image
words in $H$). Thus we view $S$ as a subset of the cartesian product
$H^{n+2}$.

Let $k$ be the maximum length of any $|\psi(a_i)|$. Then by Theorem \ref{classdist} we have
\begin{align*}
S &= \{ t = (\phi(a_1), \dots, \phi(a_n), u, v) \mid [u] \neq [v] \}
\supset \{ t \in H^{n+2} \mid t \text{ has remnant length at least $k$} \} \\
&= \{ \rho: F_{n+2} \to H \mid \rho \text{ has remnant length at least
  $k$} \} 
\end{align*}
and by Lemma \ref{genericlem} this set is generic.
\end{proof}

Since $\psi$ above is allowed to be any homomorphism, the special
cases where $\psi$ is the identity homomorphism and the trivial
homomorphism give:

\begin{cor}\label{triplecor}
Let $G$ and $H$ be free groups with the rank of $H$ greater than 1.
\begin{itemize}
\item If $G=H$ and $[u]$ denotes the singly-twisted conjugacy class of $u
\in G$ with respect to an endomorphism $\phi:G \to G$, then the set
\[ \{ (\phi, u, v) \mid [u] \neq [v] \} \] 
is generic.
\item For homomorphisms $\phi: G \to H$, the set
\[ \{ (\phi, u, v) \mid uv^{-1} \not\in \phi(G) \} \]
is generic.
\end{itemize}
\end{cor}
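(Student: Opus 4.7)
The plan is to derive both bullets as immediate specializations of Theorem \ref{generictriple}, which already gives genericity of the set $\{(\phi,u,v) \mid [u]\neq[v]\}$ for any fixed homomorphism $\psi:G\to H$. The idea is simply to pick the $\psi$ that collapses the doubly-twisted conjugacy relation to the one named in each bullet.

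For the first bullet I would take $\psi=\mathrm{id}_G:G\to G$, which is admissible since $G=H$ and the rank of $H$ is greater than $1$. The defining equation $u=\phi(z)\,v\,\psi(z)^{-1}$ then reads $u=\phi(z)\,v\,z^{-1}$, which is exactly the singly-twisted conjugacy relation with respect to $\phi$. Thus the set appearing in the first bullet coincides with the generic set $S$ of Theorem \ref{generictriple} for this choice of $\psi$. For the second bullet I would instead take $\psi$ to be the trivial homomorphism sending every generator of $G$ to $1\in H$. Then the doubly-twisted equation becomes $u=\phi(z)\,v$, equivalently $uv^{-1}=\phi(z)$ for some $z\in G$, so $[u]=[v]$ iff $uv^{-1}\in\phi(G)$. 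Again the set in the bullet is the generic set furnished by Theorem \ref{generictriple}.

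There is essentially no obstacle, since Theorem \ref{generictriple} permits $\psi$ to be any homomorphism whatsoever. The only things to check are that the identity and the trivial homomorphism are admissible choices of $\psi$ and that the doubly-twisted equation specializes as claimed in each case — both verifications are immediate from the definitions.
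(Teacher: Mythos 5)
Your proposal is correct and coincides with the paper's own proof: both bullets are obtained from Theorem \ref{generictriple} by taking $\psi$ to be the identity (first bullet) and the trivial homomorphism (second bullet), with the same observation that $[u]=[v]$ for trivial $\psi$ means $uv^{-1}\in\phi(G)$. No issues.
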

\begin{proof}
The first statement follows directly from Theorem \ref{generictriple}
letting $\psi$ be the identity. For the second statement, let $\psi$
be the trivial homomorphism. Then $[u]=[v]$ if and only if $u =
\phi(z)v$, which is to say that $uv^{-1} \in \phi(G)$.
\end{proof}

\section{The densities of injections and surjections}\label{mtvsection}
From the second statement of Corollary \ref{triplecor}, together with
Lemma \ref{genericlem}, we obtain an alternative (and easier) proof of
a recent result by Martino, Turner, and Ventura in \cite{mtv08}
concerning the densities of injective and surjective homomorphisms of
free groups. The preprint \cite{mtv08} addresses only the second
statement below:
\begin{thm}\label{mtvthm}
Given free groups $G$ and $H$, let
$\Epi(G,H)$ and $\Mono(G,H)$ be the sets of all surjective and
injective homomorphisms $G \to H$, respectively. 

\begin{enumerate}
\item \label{one1} If the rank of $H$ is 1 and the rank of $G$ is
$n>1$, then
\[ D(\Epi(G,H)) = \frac{1}{\zeta(n)}, \quad D(\Mono(G,H))
= 0, \]
where $\zeta(n)$ is the Reimann zeta function. (Note that $\zeta(n)
\to 1$ as $n \to \infty$.)
\item \label{other} If the ranks of $G$ and $H$ are both 1, or the rank
  of $H$ is greater than 1, then 
\[ D(\Epi(G,H)) = 0, \quad D(\Mono(G,H)) = 1. \]
\end{enumerate}
\end{thm}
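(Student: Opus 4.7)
I would split the proof into three mutually exclusive cases according to the ranks of $G$ and $H$: (i) $\mathrm{rank}(H) > 1$, (ii) both ranks equal to $1$, and (iii) $\mathrm{rank}(H) = 1$ with $\mathrm{rank}(G) = n > 1$. Case (i) is handled by the generic remnant machinery already developed in the paper, while cases (ii) and (iii) reduce to direct, essentially number-theoretic counts.

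For the injective side of case (i), any homomorphism $\phi: G \to H$ with remnant length at least $1$ is automatically injective: expanding $\phi(z) = X_1 \cdots X_k$ as in the discussion after Definition \ref{remnantdef}, the reduced length is bounded below by $\sum_i |\Rem_\phi a_{j_i}| > 0$ for any nontrivial reduced $z$, so $\phi(z) \neq 1$. Lemma \ref{genericlem} then says such $\phi$ form a generic set, so $D(\Mono(G,H)) = 1$. For the surjective side, I would invoke the second part of Corollary \ref{triplecor}: the complementary set $S^c = \{(\phi,u,v) \mid uv^{-1} \in \phi(G)\}$ has density $0$. Whenever $\phi$ is surjective, every triple $(\phi,u,v)$ lies in $S^c$, so the Fubini-style counting inequality
\[
\bigl|S^c \cap (H_p)^{n+2}\bigr| \;\geq\; \bigl|\Epi(G,H) \cap (H_p)^n\bigr| \cdot |H_p|^2,
\]
followed by dividing both sides by $|H_p|^{n+2}$, forces $D(\Epi(G,H)) = 0$.

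Cases (ii) and (iii) are classical. In (ii) we have $G = H = \Z$, a homomorphism is multiplication by some $k \in \Z$, injective iff $k \neq 0$ and surjective iff $k = \pm 1$; the stated densities are immediate. In (iii), every homomorphism $G \to \Z$ kills the commutator $[a_1, a_2]$, so $\Mono(G,H) = \emptyset$ and $D(\Mono(G,H)) = 0$; and $\phi$ viewed as the tuple $(k_1, \ldots, k_n) \in \Z^n$ of integer images of the generators is surjective precisely when $\gcd(k_1, \ldots, k_n) = 1$. Since $(H_p)^n$ is the cube $\{|k_i| \le p\}^n$, $D(\Epi(G,H))$ is exactly the classical natural density of primitive lattice points in $\Z^n$, which equals $1/\zeta(n)$ by a standard M\"obius inversion argument (or by direct citation). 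The only nonroutine step in the whole proof is the Fubini-style slice count in case (i); everything else is either a quotation of Lemma \ref{genericlem} and Corollary \ref{triplecor} or a textbook calculation, which is consistent with the paper's claim that the proof is ``easier'' than that of Martino--Turner--Ventura.
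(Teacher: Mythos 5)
Your proposal is correct and follows essentially the same route as the paper: the same three-case split, injectivity from remnant plus Lemma \ref{genericlem}, density-zero of surjections from the second part of Corollary \ref{triplecor} via a product/slice counting argument, and the classical gcd/coprimality density for the rank-one target case. Your commutator argument for $\Mono(G,\Z)=\emptyset$ and the explicit Fubini-style inequality are only cosmetic variants of the paper's steps.
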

\begin{proof}
Let $G$ have generators $a_1, \dots, a_n$.

Statement \ref{one1} concerns homomorphisms $G \to \Z$, each of which
is equivalent to a choice of $n$ integers (where $n>1$). Let a
homomorphism $\phi$ be given by integers $m_1, \dots, m_n$. 
Then $\phi$ is never injective: if $\phi(a_1) = m_1$ and
$\phi(a_2) = m_2$, then $\phi(a_1^{m_2}) = m_1m_2 =
\phi(a_2^{m_1})$. Thus $\Mono(G,H)$ is empty, and so $D(\Mono(G,H)) =
0$.

The homomorphism
$\phi$ is surjective if and only if there are integers $k_1, \dots,
k_n$ with 
\[ k_1m_1 + \dots + k_nm_n = 1. \]
This in turn is equivalent to requiring that $\gcd(m_1, \dots, m_n)=1$,
since the gcd is the smallest positive integer which is an integral
linear combination of $m_1, \dots, 
m_n$. It is known that the probability (in the appropriate asymptotic
sense) of $n$ randomly chosen integers being coprime is $1/\zeta(n)$,
see \cite{nyma72}. Thus $D(\Epi(G,H)) = 1/\zeta(n)$. 

Now we prove statement \ref{other}, first in the case where the ranks
of $G$ and $H$ are both 1. 
We may consider both $G$ and $H$ to be the
integers $\Z$. In this case, a homomorphism is equivalent to a choice
of a single integer (the degree of the homomorphism). The homomorphism
will be surjective if and only if the degree is $\pm 1$, and will be
injective if and only if the degree is nonzero. The desired densities
follow.

Now we prove the case where the rank of $H$ is greater than 1. We
first note that any homomorphism $\phi: G \to H$ with remnant is
injective: If some $\phi$ with remnant were not injective, then there
would be words $x, y \in G$ with  
\[ \phi(x)\phi(y)^{-1} = 1. \]
But writing the above in terms of generators will show that the above
product cannot cancel, since the remnants will remain. Since the
homomorphisms with remnant have density 1 by Lemma \ref{genericlem}, we
have $D(\Mono(G,H)) = 1$.

The statement concerning $\Epi(G,H)$ is implied by the second
statement of Corollary \ref{triplecor}.
Let $S$ be the set of triples $(\phi, u, v)$ with $uv^{-1} \not \in
\phi(G)$, and Corollary \ref{triplecor} says that $D(S) = 1$. Let $T$ be
the set of non-surjective homomorphisms $G \to H$. Certainly $S$ is a
subset of $T \times H \times H$, and it is easy to check that $D(A
\times H) = D(A)$ for any set $A$. Thus we have
\[
D(\Epi(G,H)) = 1 - D(T) = 1 - D(T \times H \times H) \le 1-D(S) = 0.
\]
\end{proof}

The second statement of Corollary \ref{triplecor} suggests that a much
stronger statement concerning surjective homomorphisms may be
possible. In the case where $H$ has rank greater than 1, we have shown
that the image set $\phi(G)$ is a proper subset of $H$ with probability
1. We wish to give a more specific measure of the generic size of the
subset $\phi(G) \subset H$. 

Let $\ED(G,H)$ be the \emph{expected value} of $D(\phi(G))$, which we define
as follows:
\[ \ED(G,H) = \lim_{p \to \infty} \frac{1}{|H_p|^n} \sum_{\phi \in
  (H_p)^n} D(\phi(G)), \]
where $n$ is the number of generators of $G$, so we regard a
homomorphism $\phi:G \to H$ as an element of $H^n$. 

This expected value is related to $D(\Epi(G,H))$ as follows: a
surjection $\phi$ has $D(\phi(G)) = 1$, and so, letting $E_p =
\Epi(G,H) \cap (H_p)^n$, we have
\[
\ED(G,H) \ge \lim_{p \to \infty} \frac{1}{|H_p|^n} \sum_{\phi \in
  E_p} 1 = \lim_{p \to \infty} \frac{|E_p|}{|H_p|^n} =
D(\Epi(G,H)). 
\]

Thus Theorem \ref{mtvthm} shows that if $H$ has rank 1 and the rank of
$G$ is $n>1$, then $\ED(G,H) \ge 1/\zeta(n)$. Theorem \ref{mtvthm}
gives no information in the case where $H$ has rank greater than 1,
since it would only imply that $\ED(G,H) \ge 0$, which is already clear.
Our goal for the remainder of the section is to compute the precise
value of $\ED(G,H)$. We rely on a lemma which estimates $D(\phi(G))$ when
$\phi$ has remnant.

\begin{lem}\label{poslem}
Let $\phi:G \to H$ be a homomorphism with remnant length $l$, and let
$n>1$ be the number of generators of $H$. Then  
\[ D(\phi(G)) \le 16n(2n-1)^{\lceil l/2 \rceil}. \]
\end{lem}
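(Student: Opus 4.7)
The plan is to bound $|\phi(G) \cap H_p|$ and then divide by $|H_p|$ (which grows like $(2n-1)^p$) and let $p \to \infty$.

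I would first record two immediate consequences of the remnant hypothesis. For a reduced nontrivial $z = a_{j_1}^{\eta_1} \cdots a_{j_k}^{\eta_k} \in G$, the product $\phi(a_{j_1})^{\eta_1} \cdots \phi(a_{j_k})^{\eta_k}$ admits the reduced decomposition $\phi(z) = R_1 \cdots R_k$ in which each $R_i$ contains $(\Rem_\phi a_{j_i})^{\eta_i}$ as a subword. This gives (a) $\phi$ is injective (since $\phi(z) \ne 1$ for any nontrivial reduced $z$, by exactly the argument used in the proof of Theorem~\ref{mtvthm}), and (b) the length bound $|\phi(z)| = \sum_i |R_i| \ge \sum_i |\Rem_\phi a_{j_i}| \ge l \cdot |z|$.

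Combining (a) and (b), $|\phi(G) \cap H_p|$ equals the number of $z \in G$ with $|\phi(z)| \le p$, which is at most $|G_{\lfloor p/l \rfloor}|$. The remaining task is to rephrase this estimate as a bound depending only on $n$ and $l$, not on the rank of $G$. My approach is to count image words directly by structural features forced on them by the remnant decomposition: the first $\lceil l/2 \rceil$ letters of any $w \in \phi(G)$ must agree with an initial segment of some $\phi(a_i^{\pm 1})$ extending into its remnant, so those letters are confined to one of at most $2n$ specific strings rather than ranging over all $2n(2n-1)^{\lceil l/2 \rceil - 1}$ reduced prefixes of that length. This is where the factor $(2n-1)^{-\lceil l/2 \rceil}$ enters the density bound, and a standard geometric-series manipulation together with boundary corrections at the right end of $w$ should absorb the remaining constants into the $16n$ prefactor.

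The principal obstacle will be making the ``half-remnant'' argument precise. Two points need care: the exponent $\lceil l/2 \rceil$ rather than $l$ comes from the fact that only the portion of a remnant close to an endpoint of $\phi(a_i^{\pm 1})$ is guaranteed to appear verbatim at the corresponding position in $w$ (the other half could overlap with material cancelled by a neighbour), and the constant $16n$ requires careful bookkeeping of the boundary contributions together with the reduction from a bound in terms of $G$'s rank to one only in terms of $n$. Once those are settled, dividing by $|H_p|$ and taking the limit is routine.
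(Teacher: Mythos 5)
Your sketch is essentially correct, but it reaches the bound by a more direct route than the paper. The paper never counts $\phi(G)$ itself: it observes that $w \in \phi(G)$ forces the extended homomorphism $\phi * w$ to have no remnant, and then bounds the density of the larger set $S = \{ w \mid \phi * w \text{ has no remnant} \}$. Analyzing failure of remnant for $\phi*w$ requires matching an initial or terminal subword of $w$ or $w^{-1}$ against one of the words $u_is_i$ or $(t_iv_i)^{-1}$ obtained by splitting each remnant $r_i$ into halves $s_it_i$; the halves are genuinely needed there because $w$ may cancel into $\phi(a_i)$ from one side only, with some other image word eating the other side. You instead count the image directly: a nontrivial $w = \phi(z)$, with $z$ reduced and beginning in $a_i^{\pm 1}$, must begin with the prefix $u_ir_i$ (respectively $(r_iv_i)^{-1}$), so its initial segment is one of a short list of fixed words of length at least $l$; since the number of reduced words of length $p$ with a fixed prefix of length $m$ is $(2n-1)^{p-m}$, summing over lengths and dividing by $|H_p|$ gives the density bound. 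This is simpler than the paper's argument (no suffix cases, no analysis of when $\phi*w$ loses remnant) and in fact yields the stronger exponent $-l$ in place of $-\lceil l/2 \rceil$. (Your opening estimate via $|G_{\lfloor p/l \rfloor}|$, which you rightly abandon, is not needed at all; neither is injectivity.)

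Two corrections. First, your stated reason for the exponent $\lceil l/2 \rceil$ is not right: by the definition of remnant, the cancellation between adjacent syllables in $\phi(z) = X_1 \cdots X_k$ never reaches $\Rem_\phi a_{j_i}$ at all, so the \emph{entire} remnant of the leading syllable survives in the prefix of $w$ --- no ``half could overlap with cancelled material'' in your setting. The halving is an artifact of the paper's detour through the set $S$, and carrying it along in your argument is harmless (a forced prefix of length $l$ certainly forces one of length $\lceil l/2 \rceil$), but it should not be presented as a necessity. Second, the forced prefixes are indexed by the generators of $G$ and their inverses, so your ``at most $2n$ specific strings'' should be twice the rank of $G$, not twice the rank of $H$; the paper's own proof makes the same silent identification (it takes $G$ to have $n$ generators although the statement declares $n$ to be the rank of $H$), and the discrepancy is harmless for the application in Theorem \ref{edthm}, but to obtain the constant $16n$ exactly as displayed you must either track the rank of $G$ in the constant or note that it does not matter for the intended use. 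Finally, note that the displayed inequality has a sign typo: the exponent should be $-\lceil l/2 \rceil$, which is what both your argument and the paper's proof actually establish.
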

\begin{proof}
If $\phi$ does not have remnant, then $l = 0$ and the statement to be
proved is $D(\phi(G)) \le 16n$, which is always the case. Thus we
assume that $\phi$ has remnant, and so $l>0$.

If $w \in \phi(G)$, then there is some $z$ with $w\phi(z) = 1$.
Thus if $w \in \phi(G)$, then $\phi * w$ does not have
remnant. Letting 
\[ S = \{ w \mid \phi * w \text{ does not have remnant}\},  \]
we have $\phi(G) \subset S$. We will give an upper bound on $D(S)$,
which will imply an upper bound on $D(\phi(G))$.

Let $G$ have generators $a_1, \dots, a_n$, and let $u_i$ and $v_i$ be
the subwords of $\phi(a_i)$ respectively ``before'' and ``after'' the
remnant subword. That is, we can write $\phi(a_i)$ as 
\[ \phi(a_i) = u_i r_i v_i, \]
where $r_i = \Rem_\phi a_i$, and the product $u_i r_i v_i$ is reduced
(allowing perhaps $u_i$ and $v_i$ to be trivial). Write $r_i =
s_it_i$, where $|s_i|$ and $|t_i|$ are at most $\lceil |r_i|/2 \rceil
\ge \lceil l/2 \rceil$. For brevity below, write $k = \lceil l/2 \rceil$.

In order for $\phi * w$ to have no remnant, some initial 
subword of $w$ or $w^{-1}$ must equal one of the words $u_is_i$ or
$(t_iv_i)^{-1}$, or some terminal subword of $w$ or $w^{-1}$ must equal
$(u_is_i)^{-1}$ or $t_iv_i$. (Note that all of these words have length
greater than $k$.)

If $x$ is a word of length $m<p$, the number of words $w$ of length $p$
having $x$ as the initial subword is $(2n-1)^{p-m}$, since the first
$m$ letters of $w$ are fixed, and the remaining $p-m$ letters can be
any letter of $H$ except the inverse of the previous. 
Thus the number of 
words $w$ having $u_is_i$ as the initial subword is
$(2n-1)^{p-|u_is_i|} \le (2n-1)^{p-k}$. Similarly the number of words $w$ of
length $p$ having $(t_iv_i)^{-1}$ as the initial subword is at most
$(2n-1)^{p-k}$, and the number of words $w$ having $(u_is_i)^{-1}$ as the
terminal subword and the number of words $w$ having $t_iv_i$ as the
terminal subword are at most $(2n-1)^{p-k}$. Since there are $n$ possible
values for $i$, and we must allow for words $w^{-1}$ with each of the
above 4 constraints, we have
\[ |S \cap H_p| \le 8n(2n-1)^{p-k}, \]
and thus
\begin{equation}\label{Sdensity} 
D(S) \le \lim_{p \to \infty} 8n\frac{(2n-1)^{p-k}}{|H_p|} 
\end{equation}

For $i>0$, the number of words of length exactly $i$ is $2n(2n-1)^{i-1}$, 
since the first letter can be any letter of $H$, while each subsequent
letter can be anything but the inverse of the previous. Summing gives
the formula 
\[
|H_p| = 1 + \sum_{i=1}^p 2n(2n-1)^{i-1} = \frac{n(2n-1)^p - 1}{n-1}
\]
and thus we have
\[
\frac{(2n-1)^{p-k}}{|H_p|} = \frac{(n-1)(2n-1)^{p-k}}{n(2n-1)^p - 1}
\le 2 \frac{(n-1)(2n-1)^{p-k}}{n(2n-1)^p} \le 2(2n-1)^{-k}.
\]

Then \eqref{Sdensity} becomes
\[ D(S) \le 16n(2n-1)^{-k}, \]
and the fact that $\phi(G) \subset S$ gives the desired result.
\end{proof}

We now make the computation of the expected value. 
\begin{thm}\label{edthm}
Let $G$ and $H$ be free groups. 
\begin{enumerate}
\item If the rank of $H$ is 1 and the rank of $G$ is $n>1$, then
\[ \ED(G,H) = \frac{\zeta(n+1)}{\zeta(n)} \]
\item If the ranks of $G$ and $H$ are both 1, or if the rank of $H$ is
  greater than 1, then 
\[ \ED(G,H) = 0. \]
\end{enumerate}
\end{thm}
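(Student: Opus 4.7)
The plan is to treat the three cases separately: when $H$ has rank at least $2$, our remnant machinery forces generic images to be negligibly small; when both groups have rank $1$, a harmonic-sum estimate suffices; and when $H=\Z$ but $G$ has higher rank, the problem reduces to a $\gcd$ computation closely related to the one in Theorem \ref{mtvthm}.

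For the sub-case of (2) with $H$ of rank $r>1$, fix any natural number $l$. Lemma \ref{genericlem} ensures that the set of $\phi$ with remnant length at least $l$ has density $1$, and for each such $\phi$ Lemma \ref{poslem} gives $D(\phi(G)) \le 16r(2r-1)^{-\lceil l/2\rceil}$. Using the trivial bound $D(\phi(G))\le 1$ on the remaining density-zero set of homomorphisms, the averaging sum defining $\ED$ yields
\[ \ED(G,H) \le 16r(2r-1)^{-\lceil l/2\rceil}, \]
and letting $l\to\infty$ forces $\ED(G,H)=0$. For the remaining sub-case of (2), with both ranks equal to $1$, a homomorphism $\Z\to\Z$ is multiplication by some $k$ with $D(k\Z)=1/|k|$ when $k\neq 0$, so
\[ \ED(G,H) = \lim_{p \to \infty} \frac{2}{2p+1}\sum_{k=1}^p \frac{1}{k} = 0. \]

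Case (1) is the substantive computation. A homomorphism is a tuple $(m_1,\dots,m_n)\in\Z^n$ whose image is $d\Z$ with $d=\gcd(m_1,\dots,m_n)$, so $D(\phi(G))=1/d$ for $d\ge 1$ and $0$ for $d=0$. Letting $N_d(p)$ count the tuples in $\{-p,\dots,p\}^n$ with $\gcd$ exactly $d$,
\[ \ED(G,H) = \lim_{p\to\infty} \frac{1}{(2p+1)^n}\sum_{d=1}^p \frac{N_d(p)}{d}. \]
A short sieve argument (using the coprime-density result of \cite{nyma72} already cited in the proof of Theorem \ref{mtvthm}) gives the pointwise limit $N_d(p)/(2p+1)^n \to 1/(d^n\zeta(n))$ as well as a uniform bound $N_d(p)/(2p+1)^n \le C/d^n$, since any such tuple has all entries divisible by $d$. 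Interchanging the limit and sum via dominated convergence with the summable majorant $\sum_d C/d^{n+1}$ then gives
\[ \ED(G,H) = \sum_{d=1}^\infty \frac{1}{d}\cdot \frac{1}{d^n\zeta(n)} = \frac{\zeta(n+1)}{\zeta(n)}. \]
The main obstacle will be this interchange of limit and summation; it is the only step that does not follow immediately from cited results, but the uniform $C/d^n$ bound makes it a routine application of dominated convergence.
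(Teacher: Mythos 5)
Your proposal is correct and follows essentially the same route as the paper in all three cases: Lemma \ref{genericlem} plus Lemma \ref{poslem} when the rank of $H$ exceeds $1$, the harmonic-sum computation when both ranks are $1$, and the rearrangement of the average of $1/\gcd$ into $\sum_d d^{-1}\cdot d^{-n}/\zeta(n) = \zeta(n+1)/\zeta(n)$ when $H=\Z$ and $G$ has rank $n>1$. The only divergence is that you justify the limit--sum interchange by dominated convergence with the uniform bound $N_d(p)/(2p+1)^n \le C/d^n$ and obtain the gcd-density by a sieve from \cite{nyma72}, whereas the paper cites \cite{cs87} for that density and dismisses the interchange with a brief (and, as stated, insufficient) remark, so your treatment of that step is if anything more careful.
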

\begin{proof}
We begin with the first statement, which concerns homomorphisms $G \to
\Z$, each of which is equivalent to a choice of $n$ integers (with
$n>1$). If $\phi$ is given by the tuple of integers $\vt = (m_1, \dots,
m_n)$, then $\phi(G)$ is equal to the set $d\Z$, the integer multiples
of $d$, where $d = \gcd(\vt)$. This set has density
$1/\gcd(\vt)$. Thus we have:
\[ \ED(G,\Z) = \lim_{p \to \infty} \frac{1}{|H_p|^n} \sum_{
  \vt \in (H_p)^n} \frac{1}{\gcd(\vt)} \]

Thus $\ED(G,\Z)$ is equal to the expected value of the reciprocal of
the $\gcd$ function when applied to $n$ arguments. A standard
rearrangement expresses this expected value as a series:
\begin{align*}
\ED(G,\Z) &= \lim_{p \to \infty} \frac{1}{|H_p|^n} \sum_{\vt
\in (H_p)^n} \frac{1}{\gcd(\vt)} \\
&= \lim_{p \to \infty} \frac{1}{|H_p|^n} \sum_{d = 1}^p \frac{1}{d}
|\{ \vt \in (H_p)^n \mid \gcd(\vt) = d \}| \\
&= \sum_{d=1}^\infty \frac1d \lim_{p \to \infty}
\frac{1}{|H_p|^n} |\{ \vt \in (H_p)^n \mid
\gcd(\vt) = d\}|, 
\end{align*}
where the exchanging of the limit and the sum is valid provided that
the inner limit exists and is finite. The inner limit can be interpreted as 
the probability (in the appropriate asymptotic
sense) that a random $n$-tuple has gcd equal to $d$. 
This probability
is known to be equal to $d^{-n}/\zeta(n)$ (see equation 5.1 of
\cite{cs87}). This gives 
\[ \ED(G,\Z) = \sum_{d=1}^\infty \frac{d^{-1-n}}{\zeta(n)} =
\frac{1}{\zeta(n)} \sum_{d=1}^{\infty} \frac{1}{d^{n+1}} =
\frac{\zeta(n+1)}{\zeta(n)} \]
where the last equality is the definition of $\zeta(n+1)$. 

Now we prove the second statement. First we treat the case where $G$
and $H$ are rank 1. Then we will write $G =H=\Z$, and a homomorphism
$\phi:\Z \to \Z$ is equivalent to a single integer (the degree of
$\phi$). Writing $\phi \in \Z$ as this integer, we have
\[ \ED(\Z,\Z) = \lim_{p \to \infty} \frac{1}{2p+1} \sum_{\phi = -p}^p
D(\phi(\Z)). \]

If $\phi$ is the homomorphism given by multiplication by $k$, then the
image $\phi(\Z)$ is the set $k\Z = \{kn \mid n \in \Z \}$, which has
density $1/|k|$. Thus the above becomes
\[ \ED(\Z,\Z) = \lim_{p \to \infty} \frac{1}{2p+1} 2\sum_{k=1}^p
\frac{1}{k}. \]
(We have dropped the $k=0$ term, since the image of the trivial
homomorphism has density 0.) It is routine to verify that the above
limit exists and equals 0.

Now we prove the case where the rank of $H$ is greater than 1.
Let $R_l$ be the set of all homomorphisms $\phi:G \to H$ with remnant
length at least $l$. By Lemma \ref{genericlem} this set is generic.

Let $n$ be the rank of $G$, and let $l$ be any natural number. Then,
using Lemma \ref{poslem}, we have
\begin{align*}
\ED(G,H) &= \lim_{p \to \infty} \frac{1}{|H_p|^n} \left( \sum_{\phi \in
  R_l \cap (H_p)^n} D(\phi(G)) + \sum_{\phi \in (H_p)^n - R_l}
D(\phi(G)) \right) \\
&\le \lim_{p \to \infty} \frac{1}{|H_p|^n} \left( \sum_{\phi \in R_l
  \cap (H_p)^n} 16n(2n-1)^{-\lceil l/2 \rceil} + \sum_{\phi \in (H_p)^n
  - R_l} 1 \right) \\
&= \lim_{p \to \infty} 16n(2n-1)^{-\lceil l/2 \rceil} \frac{|R_l \cap
  (H_p)^n|}{|H_p|^n} + \frac{|(H_p)^n - R_l|}{|H_p|^n} 
\end{align*}
In the limit as $p \to \infty$, the first fraction converges to the
density of $R_l$, which is 1, and the second converges to the density
of its complement, which is 0. Thus we have
\[ \ED(G,H) \le 16n(2n-1)^{-\lceil l/2 \rceil}, \]
and since $l$ is any natural number, we have $\ED(G,H) = 0$.
\end{proof}

\section{Bounded solution length}\label{bslsection}
In this section we show that a remnant inequality similar to the one
in Theorem \ref{classdist} implies an algorithm for deciding
doubly-twisted conjugacy relations.

\begin{defn}
Given homomorphisms
$\phi, \psi: G \to H$ and a pair of group elements $u,v \in H$, we say
that the pair $(u,v)$ has \emph{bounded solution length} (or \BSL) if
there is some $k>0$ such that the equation $u =\phi(z)v\psi(z)^{-1}$
is satisfied (if at all) only if $|z|\le k$. The smallest such $k$ is
called the \emph{solution bound} (or \SB) for $(u, v)$.
\end{defn}

Our casting of the \BSL\ condition generalizes
the concept for singly-twisted conjugacy of the same name in
\cite{kim07}, where $G=H$ and $\psi$ is assumed to be the identity
homomorphism. (Kim works in the setting where the words $u$ and $v$ are
always taken to be ``Wagner tails'' of $\phi$ which are not 
indirectly related in Wagner's algorithm. We omit this distinction
so that the \BSL\ condition can be defined without any reference to the
set of Wagner tails.)

Our main theorem in this section is that if $\phi$ and $\psi$ satisfy
a remnant condition similar to the condition in Theorem
\ref{classdist} then any pair $(u,v)$ will have \BSL\ with a
predictable solution bound. This implies an algorithm for deciding
doubly-twisted conjugacy relations between any elements. 

The following theorem was independently proved in the setting of
singly-twisted conjugacy by Hart, Heath, and Keppelmann in
\cite{hhk09} using essentially the same argument. Their solution bound
was better than the one initially discovered by this author, and has
been incorporated into the proof below. 

\begin{thm} \label{bsl}
Let $\phi, \psi: G \to H$ be homomorphisms, such that
\[ |\Rem_\phi a_i| > |\psi(a_i)| \]
for each generator $a_i\in G$. Let $l = \min_i(|\Rem_\phi a_i| -
|\psi(a_i)|)$.  Then any pair $(u,v)$ has \BSL, with solution bound 
\[ \SB \le \frac{|u| + |v|}{l}. \]
\end{thm}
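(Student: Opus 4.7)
The plan is to mirror the length-comparison argument used in the proof of Theorem \ref{classdist}, but to rearrange the doubly-twisted conjugacy equation so that the stronger remnant inequality on $\phi$ (which now strictly exceeds the $\psi$-images) forces a bound on $|z|$ rather than merely a contradiction.

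First I would suppose that $z \in G$ satisfies $u = \phi(z) v \psi(z)^{-1}$, and write $z$ in reduced form $z = a_{j_1}^{\eta_1} \cdots a_{j_k}^{\eta_k}$, with $k = |z|$. Rearranging the defining equation algebraically gives
\[
\phi(z) = u\,\psi(z)\,v^{-1},
\]
so by the triangle inequality for reduced word length,
\[
|\phi(z)| \le |u| + |\psi(z)| + |v|.
\]

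Next I would produce a matching lower bound on $|\phi(z)|$ using the remnant hypothesis. Since $\phi$ has remnant, the discussion following Definition \ref{remnantdef} shows that $\phi(z)$ reduces to a word containing $(\Rem_\phi a_{j_i})^{\eta_i}$ as a subword for each $i$, and these subwords occur disjointly, so
\[
|\phi(z)| \ge \sum_{i=1}^k |\Rem_\phi a_{j_i}|.
\]
By the definition of $l$, each term satisfies $|\Rem_\phi a_{j_i}| \ge |\psi(a_{j_i})| + l$, whence
\[
|\phi(z)| \ge kl + \sum_{i=1}^k |\psi(a_{j_i})| \ge kl + |\psi(z)|,
\]
where the last inequality is just the fact that reduction can only shorten $\psi(z)$.

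Combining the two bounds gives $kl + |\psi(z)| \le |u| + |\psi(z)| + |v|$, i.e.\ $k \le (|u|+|v|)/l$, which is exactly the claimed solution bound. I do not foresee a real obstacle here: the argument is essentially the one for Theorem \ref{classdist} with the strict remnant surplus $l$ extracted as a per-letter ``savings''. The only point worth double-checking is the rearrangement step, and the observation that $|\psi(z)|$ cancels from both sides, which is what makes the bound depend only on $|u|$, $|v|$, and $l$ and not on $\psi(z)$ itself.
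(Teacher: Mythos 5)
Your proof is correct and is essentially the paper's argument: both establish the bound by a length comparison in which the remnant hypothesis forces $|\phi(z)|$ to exceed the $\psi$-contribution by at least $l$ per letter of $z$, with $|u|$ and $|v|$ absorbed by a triangle inequality. Moving the triangle inequality to the side $\phi(z) = u\,\psi(z)\,v^{-1}$ rather than bounding $|u^{-1}\phi(z)v|$ from below is only a cosmetic rearrangement of the same computation.
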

\begin{proof}
Let $u, v \in H$, and let $z \in G$ be a word of length $k$. To show
that $(u,v)$ has \BSL, we will show that for $k$ sufficiently
large, we have $\psi(z) \neq u^{-1} \phi(z) v$.

As in the proof of Theorem \ref{classdist}, write $z$ as the reduced
word $z = a_{j_1}^{\eta_1} \dots a_{j_k}^{\eta_k}$, where each $a_j$
is a generator of $G$ and each $\eta_i = \pm 1$. Let $X_i =
\phi(a_{j_i}^{\eta_i})$ and $Y_i = \psi(a_{j_i}^{\eta_i})$, then we 
have
\[ u^{-1} \phi(z) v = u^{-1} X_1 \dots X_k v. \]
Since $\phi$ has remnant, we can use notation as in the proof of
Theorem \ref{classdist} (though this time not worrying about $u^{-1}$
and $v$) to write this product as
\[
u^{-1} \phi(z) v = u^{-1} R_1 \dots R_k v, 
\]
where each $R_i$ is a subword of $X_i$ with $|R_i| \ge
|\Rem_\phi a_{j_i}|$, and no cancellation occurs in any $R_i R_{i+1}$. 

Now we will show that $\psi(z) \neq u^{-1} \phi(z) v$ by showing that
these two words are of different lengths for sufficiently large $k$. We have
\begin{align*}
|u^{-1}\phi(z)v| - |\psi(z)| &= |u^{-1} R_{1} \dots R_{k} v|
- |Y_1 \dots Y_k| \\
&\ge |R_{1} \dots R_{k}| - |u| - |v| - |Y_1 \dots Y_k| \\
&=  - |u| - |v| + \left(\sum_{i=1}^k |R_i|\right) - |Y_1 \dots Y_k| \\
&\ge -|u| - |v| + \sum_{i=1}^k \left( |R_i| - |Y_i| \right) \\
&\ge -|u| - |v| + \sum_{i=1}^k \left( |\Rem_\phi a_{j_i}| - |Y_i| \right) .
\end{align*}
By the hypothesis to our theorem, we know that $|\Rem_\phi a| - |\psi(a)|
\ge l$ for every generator $a \in G$. Therefore the above inequalies give 
\[ |u^{-1}\phi(z)v| - |\psi(z)| \ge kl - |u| - |v|, \]
and we can choose $k$ sufficiently large so that $|u^{-1} \phi(z) v| -
|\psi(z)|$ is greater than zero. In particular it suffices to choose
\[ k > \frac{|u| + |v|}{l} \]
which is the desired solution bound.
\end{proof}

Theorem \ref{bsl} implies (subject to the remnant hypotheses)
that, given any elements $u, v \in H$, we can algorithmically
determine whether or not $[u] = [v]$:
check for equality of $u = \phi(z) v \psi(z)^{-1}$ where $z$ ranges
over all elements of $G$ with $|z|\le \frac{|u| + |v|}l$. 
\begin{exl}
Let us consider the pair of homomorphisms on the free group with two
generators: 
\[
\phi: \begin{array}{rcl}
a &\mapsto & baba^2 \\
b &\mapsto & a^2b^{-1}ab^3 \end{array}
\quad
\psi: \begin{array}{rcl}
a &\mapsto & b^{-2} \\
b &\mapsto & a \end{array}
\]
We will compare the two classes $[bab]$ and $[b^4a^2]$. (Neither
Theorem \ref{classdist} nor the result of \cite{stae09b} will suffice
to make the comparison.) It is easy to verify that the hypotheses of
Theorem \ref{bsl} are satisfied with $l=3$. Thus, if there is some $z$ with 
\[ \psi(z) = bab \phi(z) (b^4a^2)^{-1}, \]
then it must be the case that $|z| \le \frac{|bab|+|b^4a^2|}{3} = 3$.

It suffices to check all elements $z$ of length at most 3 in the above
equation. Such a check is performed easily by computer, and reveals
that no element $z$ of length at most 3 satisfies the equation. Thus
$[bab] \neq [b^4a^2]$. 
\end{exl}

This gives new and useful
algebraic decision algorithms, even in the cases where $\psi$ is taken
to be the identity or the trivial homomorphism:
\begin{cor}\label{decisioncor}
Let $G$ and $H$ be finitely generated free groups.
\begin{enumerate}
\item (Doubly-twisted version) If $\phi, \psi: G \to H$ are homomorphisms
  with $|\Rem_\phi a| > |\psi(a)|$ for every generator $a \in G$, then
  there is an algorithm to decide whether or not there is some $z$ with
\[ u = \phi(z) v \psi(z)^{-1} \]
for any $u, v \in H$.
\item (Singly-twisted version) If $\phi: G \to G$ is a homomorphism with
  $|\Rem_\phi a| > 1$ for every generator $a \in G$, then there is an
  algorithm to decide whether or not there is some $z$ with 
\[ v = \phi(z) u z^{-1} \]
for any $u,v  \in H$. (This same algorithm is obtained independently by
Hart, Heath, and Keppelmann in \cite{hhk09}.)
\item ($\psi = 1$ version) If $\phi: G \to H$ is a homomorphism with
  remnant, then there is an algorithm to decide whether or not 
\[ w \in \phi(G) \]
for any $w \in H$. 
\end{enumerate}
\end{cor}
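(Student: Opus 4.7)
The plan is to obtain all three parts of Corollary \ref{decisioncor} as immediate specializations of Theorem \ref{bsl}. Theorem \ref{bsl} already does the essential work: under the hypothesis $|\Rem_\phi a| > |\psi(a)|$ for every generator $a$, it produces an explicit bound $\SB \le (|u|+|v|)/l$, where $l = \min_i(|\Rem_\phi a_i| - |\psi(a_i)|)$ is a positive integer computable from $\phi$ and $\psi$. Once such a bound is available, the decision algorithm is the obvious brute-force procedure: enumerate the finitely many $z \in G$ with $|z| \le (|u|+|v|)/l$, compute the reduced form of $\phi(z)\, v\, \psi(z)^{-1}$, and test equality with $u$.

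Part (1) is then immediate, since its hypothesis is literally that of Theorem \ref{bsl}. For part (2), I would specialize to $\psi = \mathrm{id}_G$, so that $|\psi(a)| = 1$ for every generator; the assumption $|\Rem_\phi a| > 1$ becomes exactly the inequality required by part (1), and the doubly-twisted equation reduces to $v = \phi(z)\, u\, z^{-1}$. For part (3), I would take $\psi$ to be the trivial homomorphism and set $v = 1 \in H$; then $|\psi(a)| = 0$ for each generator, so merely having remnant guarantees $|\Rem_\phi a| \ge 1 > |\psi(a)|$, and the equation $u = \phi(z)\, v\, \psi(z)^{-1}$ collapses to $w = \phi(z)$, which is the membership condition $w \in \phi(G)$.

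There is effectively no obstacle here: all the real content has been absorbed into Theorem \ref{bsl}. The only minor points to mention in the write-up are that $l \ge 1$ in each of the three cases (so the bound is a finite, effectively computable integer), that the set of reduced words of bounded length in a free group is finite and easily enumerated, and that equality of reduced words in $H$ is trivially decidable. Everything else is bookkeeping.
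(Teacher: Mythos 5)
Your proposal is correct and follows essentially the same route as the paper: the paper also deduces all three parts directly from Theorem \ref{bsl} via the brute-force check of all $z$ with $|z| \le (|u|+|v|)/l$, specializing $\psi$ to the identity for the singly-twisted version and to the trivial homomorphism (with $v$ trivial, $u=w$) for the membership version.
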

The third statement is obtained from the first by letting $\psi$ be the trivial
homomorphism, $v$ the trivial element, and $u=w$.

We note that in Theorem \ref{bsl}, the statement that $(u,v)$
has \BSL\ over any finite set is a direct generalization of Kim's
Theorem 4.7 of \cite{kim07}. If we let $\psi$ be the identity
homomorphism, then our hypothesis is that, for all 
generators $a \in G$, we have $|\Rem_\phi(a)|
> |a| = 1$, which is to say that $|\Rem_\phi(a)| \ge 2$. This is
precisely the hypothesis used in Kim's Theorem 4.7 to show
that any pair of Wagner tails has \BSL. Kim's focus on the set of Wagner tails allows 
his solution bound to be more specific than ours (Kim shows
that $\SB \le 4$ in all cases). 

We conclude by noting that for a particular choice of homomorphism
$\psi$, the remnant hypothesis for Theorem \ref{bsl} is
satisfied for generic $\phi$. Letting $l$ be the maximum length of
$\psi(a)$ for any generator $a \in G$, Lemma \ref{genericlem} 
shows that 
\[ \{ \phi \mid |\Rem_\phi a| > |\psi(a)| \text{ for any generator $a
  \in G$}\} \]
is generic.


\begin{thebibliography}{10}

\bibitem{bmmv06}
O.~Bogopolski, A.~Martino, O.~Maslakova, and E.~Ventura.
\newblock Free-by-cyclic groups have solvable conjugacy problem.
\newblock {\em Bulletin of the London Mathematical Society}, 38:787--794, 2006.

\bibitem{cs87}
J.~Chidambaraswamy and R.~Sitarmachandrarao.
\newblock On the probability that the values of $m$ polynomials have a given
  g.c.d.
\newblock {\em Journal of Number Theory}, 26:237--245, 1987.

\bibitem{gap}
The GAP~Group.
\newblock {\em {GAP -- Groups, Algorithms, and Programming, Version 4.4.12}},
  2008.

\bibitem{gonc05}
D.~L. Gon\c{c}alves.
\newblock Coincidence theory.
\newblock In R.F. Brown, editor, {\em The Handbook of Topological Fixed Point
  Theory}, pages 3--42. Springer, 2005.

\bibitem{hhk09}
E.~Hart, P.~Heath, and E.~Keppelmann.
\newblock An algorithm for {N}ielsen type periodic numbers of maps with remnant
  on surfaces with boundary and on bouquets of circles {II}.
\newblock In preparation.

\bibitem{hhk08}
E.~Hart, P.~Heath, and E.~Keppelmann.
\newblock Algorithms for {N}ielsen type periodic numbers of maps with remnant
  on surfaces with boundary and on bouquets of circles {I}.
\newblock {\em Fundamenta Mathematicae}, 200:101--132, 2008.

\bibitem{hk07}
E.~Hart and S.~Kim.
\newblock The {N}ielsen number for free fundamental groups and maps without
  remnant.
\newblock {\em Journal of Fixed Point Theory and Applications}, 2:261--275,
  2007.

\bibitem{jian83}
B.~Jiang.
\newblock {\em Lectures on {N}ielsen fixed point theory}.
\newblock Contemporary Mathematics 14, American Mathematical Society, 1983.

\bibitem{kim07}
S.~Kim.
\newblock Computation of {N}ielsen numbers for maps of compact surfaces with
  boundary.
\newblock {\em Journal of Pure and Applied Algebra}, 208:467--479, 2007.

\bibitem{ls77}
R.~Lyndon and P.~Schupp.
\newblock {\em Combinatorial Group Theory}.
\newblock Springer, 1977.

\bibitem{mtv08}
A.~Martino, E.~C. Turner, and E.~Ventura.
\newblock The density of injective endomorphisms of a free group.
\newblock Preprint, 2008.

\bibitem{nyma72}
J.~E. Nymann.
\newblock On the probability that $k$ positive integers are relatively prime.
\newblock {\em Journal of Number Theory}, 4:469--473, 1972.

\bibitem{stae07b}
P.~C. Staecker.
\newblock Computing twisted conjugacy classes in free groups using nilpotent
  quotients.
\newblock 2007.
\newblock arxiv eprint 0709.4407.

\bibitem{stae09b}
P.~C. Staecker.
\newblock Typical elements in free groups are in different doubly-twisted
  conjugacy classes.
\newblock 2009.
\newblock arxiv eprint 0808.0277.

\bibitem{wagn99}
J.~Wagner.
\newblock An algorithm for calculating the {N}ielsen number on surfaces with
  boundary.
\newblock {\em Transactions of the American Mathematical Society}, 351:41--62,
  1999.

\end{thebibliography}
\end{document}